\numberwithin{equation}{section}
\newtheorem{theorem}{Theorem}[section]
\newtheorem{lemma}[subsection]{{\bf Lemma}}
\begin{document}

\title[Repdigits as difference of two balancing or Lucas-balancing numbers]{Repdigits as difference of two balancing or Lucas-balancing numbers} 

\author[M. Mohapatra]{M. Mohapatra}
\address{Monalisa Mohapatra, Department of Mathematics, National Institute of Technology Rourkela, Odisha-769 008, India}
\email{mmahapatra0212@gmail.com}

\author[P. K. Bhoi]{P. K.  Bhoi}
\address{Pritam Kumar Bhoi, Department of Mathematics, National Institute of Technology Rourkela, Odisha-769 008, India}
\email{pritam.bhoi@gmail.com}

\author[G. K. Panda]{G. K. Panda}
\address{Gopal Krishna Panda, Department of Mathematics, National Institute of Technology Rourkela, Odisha-769 008, India}
\email{gkpanda\_nit@rediffmail.com}

\thanks{2020 Mathematics Subject Classification: Primary 11B39, Secondary 11J86, 11D61. \\
Keywords: balancing sequence, Lucas-balancing sequence, linear forms in logarithms, Baker-Davenport reduction method}

\begin{abstract}
Repdigits are natural numbers formed by the repetition of a single digit. In this paper, we study the problem of writing repdigits as the difference of two balancing or Lucas-balancing numbers. The method of proof involves the application of Baker's theory for linear forms in logarithms of algebraic numbers and the Baker-Davenport reduction procedure. Computations are done with the help of a simple computer program in {\it Mathematica}.
\end{abstract}

\maketitle
\pagenumbering{arabic}
\pagestyle{headings}

\section{Introduction}

The balancing number sequence $(B_n)_{n\geq0}$ and the Lucas-balancing sequence $(C_n)_{n\geq0}$ are defined by the binary recurrences
	\begin{equation}\label{bn}
		B_{n+1}=6B_n-B_{n-1},\quad B_0 = 0,\quad B_1 = 1
	\end{equation}
	and
	\begin{equation}\label{cn}
		C_{n+1} = 6C_{n}-C_{n-1},\quad C_0=1,\quad C_1=3.
	\end{equation}
	The Binet’s formula for the sequences are given by
	\begin{center}
		$B_n=\frac{\alpha^{n}-\beta^{n}}{4\sqrt{2}}$
		and 
		$C_n=\frac{\alpha^{n}-\beta^{n}}{2};$ for $n\geq1$,
	\end{center}
	where $(\alpha,\beta)=(3+2\sqrt{2},3-2\sqrt{2})$ is the pair of roots of the characteristic polynomial $x^2-6x-1$. Furthermore, it can be noted that $5 < \alpha < 6, \quad 0 < \beta < 1$ and we can prove by induction that 
	\begin{center}
		$\alpha^{n-1} \leq B_n <\alpha^n$ and $\alpha^{n} < 2C_n <\alpha^{n+1}$,
	\end{center} 
	holds for all $n\geq1$.
	\newline \newline  A repdigit is a positive integer $N$ that has only one distinct digit when written in base 10. Mathematically, it is in the form $d\biggr(\dfrac{10^{k}-1 }{9}\biggr)$, where $d\in\{1,2,\ldots,9\}$. For $k=1$, it is a trivial repdigit.
	\newline\newline Recently, searching of repdigits in linear recurrence sequences have been seen in several papers. Adegbindin et. al \cite{alt2019} determined all Lucas numbers that are sums of two repdigits. MG Duman \cite{d2023} has found all Padovan numbers which can be expressed as difference of two repdigits. Erduvan et. al \cite{ekl2021} found all Fibonacci and Lucas numbers which can be written as a difference of two repdigits.  F. Luca \cite{l2000} has found all Fibonacci and Lucas numbers which are repdigits.   S. G. Rayaguru and G. K. Panda \cite{rp2021} investigated the existence of all balancing and Lucas-balancing numbers that are expressible as the sums of two repdigits. The authors showed that 35 is the only balancing number which is concatenation of two repdigits in \cite{rp2020}. In \cite{rp2018}, the authors solved  the problem of finding the repdigits as product of consecutive balancing and Lucas-balancing numbers. K. Bhoi and P.K. Ray \cite{br2021} proved that 11, 33, 55, 88 and 555 are the only repdigits that can be expressed as difference of two Fibonacci numbers and 11, 22 and 44 are the only repdigits that can be expressed as difference of two Lucas numbers. Our work closely follows K. Bhoi and P.K. Ray \cite{br2021}.

The objective of this paper is to extend this study by exploring the repdigits  expressible as difference of two balancing or Lucas-balancing numbers. For this purpose, we have considered the following two equations
	\begin{equation}\label{1}
		B_n - B_m = d\biggr(\frac{10^k -1}{9}\biggr)
	\end{equation}
	and
	\begin{equation}\label{2}
		C_n - C_m = d\biggr(\frac{10^k -1}{9}\biggr),
	\end{equation}
	with $n > m$ and $1\leq d \leq 9$.
	Assume $k \geq 2$ to avoid trivial solutions.
	
\section{Auxiliary Results}
To solve the Diophantine equations involving repdigits and terms of binary recurrence sequences, many authors have used Baker's theory to reduce lower bounds concerning linear forms in logarithms of algebraic numbers. These lower bounds play an important role in solving such Diophantine equations. We start by recalling some basic definitions and results from algebraic number theory.

	Let $\lambda$ be an algebraic number with minimal primitive polynomial 
	
		\[f(X) = a_0(X-\lambda^{(1)})\cdot\cdot\cdot(X-\lambda^{(k)}) \in \mathbb{Z}[X],\]
	
	where $a_0 > 0$ and $\lambda^{(i)}$'s are conjugates of $\lambda$. Then
	\begin{center}
		$h(\lambda) = \dfrac{1}{k}\biggr(\log a_0 + \sum\limits_{j=1}^{k}$max$\{0,\log|\lambda^{(j)}|\}\biggr)$
	\end{center}
	is called the $logarithmic$ $height$ of $\lambda$. If $\lambda = \dfrac{a}{b}$ is a rational number with $gcd(a,b)=1$ and $b>1$, then  $h(\lambda)=\log$(max$\{|a|,b\})$. We give some properties of the logarithmic height whose proofs can be found in \cite{bms2006}.
	
	Let $\gamma$ and $\eta$ be two algebraic numbers. then 
	\begin{equation*}
		\begin{split}
			& (i)\hspace{0.2cm} h(\gamma\pm\eta) \leq h(\gamma)+h(\gamma)+\log 2,\\
			&(ii)\hspace{0.2cm} h(\gamma\eta^{\pm 1})\leq h(\gamma)+h(\eta),\\
			&(iii)\hspace{0.2cm} h(\gamma ^k)=|k|h(\gamma).
		\end{split}
	\end{equation*}
	Now we give a theorem which is  deduced from  Corollary  $2.3$  of  Matveev \cite{m2000} and provides a large upper bound for the subscript $n$ in the equations \eqref{1} and \eqref{2} (also see Theorem $9.4$ in \cite{bms2006}).
	
	\begin{theorem}\label{thm1}\cite{m2000}. Let $\mathbb{L}$ be an algebraic number field of degree $d_{\mathbb{L}}$. Let $\gamma_1,\ldots,\gamma_l \in \mathbb{L}$ be positive real numbers and $b_1, \ldots, b_l$ be nonzero integers. If $\Gamma = \prod\limits_{i=1}^{l} \gamma_{i}^{b_i} -1$ is not zero, then 
	\begin{equation*}
		\log|\Gamma| > -1.4\cdot 30^{l+3}\cdot l^4.5 \cdot d_{\mathbb{L}}^2 (1+\log d_{\mathbb{L}}^2)(1+\log(D))A_1 A_2 \cdot\cdot\cdot A_l,
	\end{equation*}
	where $D\geq$ max$\{|b_1|,\ldots,|b_l|\}$ and  $A_1, \cdots, A_l$ are positive integers such that $A_j \geq h'(\gamma_j)$ = $max\{d_{\mathbb{L}}h(\gamma_j),|\log \gamma_j|, 0.16\},$ for $j= 1,\ldots,l$.
	\end{theorem}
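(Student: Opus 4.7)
The statement at the end of the excerpt is Matveev's theorem, quoted from \cite{m2000} rather than proved. A self-contained proof of Matveev's bound would occupy more than the entire present paper, so the only honest plan is to invoke it as a black box. The height inequalities and Binet formulas given earlier in the excerpt play no role in its proof; the argument lies squarely in transcendence theory, not in the elementary theory of balancing numbers.

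What one can sketch, following the Gelfond--Baker tradition that Matveev refines, is the following strategy. Suppose for contradiction that $|\Gamma|$ is so small that the asserted lower bound fails. Since for small $|\Lambda|$ one has $|\Gamma| \asymp |\Lambda|$, where $\Lambda = \sum_{i=1}^{l} b_i \log \gamma_i$, the linear form $\Lambda$ itself must be extremely small. One then constructs an auxiliary exponential polynomial
\[
F(z) \;=\; \sum_{\mathbf{k}} p(\mathbf{k})\, \gamma_1^{k_1 z} \cdots \gamma_l^{k_l z}
\]
with small integer coefficients $p(\mathbf{k})$, chosen via a Thue--Siegel pigeonhole so that $F$ vanishes to high order at many integer points. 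The smallness of $|\Lambda|$, combined with a Schwarz-lemma (maximum-modulus) estimate, allows this vanishing to be extrapolated to a larger lattice; a zero estimate of Philippon--Waldschmidt or W\"ustholz type then bounds the number of zeros such an $F$ can have, forcing $F \equiv 0$. A Kummer descent handles the resulting multiplicative dependence among the $\gamma_i$ and produces the contradiction.

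\textbf{Main obstacle.} The structural outline is classical; the real difficulty is the delicate arithmetic bookkeeping that yields the explicit dependence $1.4\cdot 30^{l+3}\cdot l^{4.5}$ on the number of logarithms together with a merely linear dependence on each height $A_j$. Sharpening precisely this dependence over earlier bounds of Baker and of Baker--W\"ustholz is Matveev's contribution, and reproducing it would require the full technical apparatus of \cite{m2000}. There is no reason to do so here; the practical plan is to invoke Theorem~\ref{thm1} later as a black box, specialised to linear forms in $\log\alpha$ and $\log 10$ that arise from the Binet formula applied to equations \eqref{1} and \eqref{2}.
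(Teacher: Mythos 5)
Your assessment matches the paper exactly: Theorem~\ref{thm1} is quoted from Matveev \cite{m2000} (see also Theorem~9.4 of \cite{bms2006}) and the paper gives no proof, using it purely as a black box in the proofs of Theorems~\ref{thm2} and~\ref{thm3}. Your sketch of the underlying Gelfond--Baker machinery is a reasonable bonus, but the operative plan --- cite and apply --- is the same as the paper's.
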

	
	The following result of Baker-Davenport due to Dujella and Peth\H{o} \cite{dp1998} is another tool in our proofs. It will be used to reduce the upper bounds of the variables on \eqref{1} and \eqref{2}.
	\newline
	
	\begin{lemma}\label{lem1}\cite{dp1998}. Let $M$ be a positive integer and $\frac{p}{q}$ be a convergent of the continued fraction of the irrational number $\tau$ such that $\tau > 6M$. Let $A, B, \mu$ be some real numbers with $A > 0$ and $B > 1$. Let $\epsilon := \left\lVert \mu q\right\rVert - M\left\lVert \tau q\right\rVert$, where $\left\lVert .\right\rVert$ denotes the distance from the nearest integer. If $\epsilon > 0$, then there exists no solution to the inequality
	\begin{equation}
		0 <|u\tau - v +\mu|<AB^{-w},
	\end{equation}
	in positive integers $u,v,w$ with $u\leq M$ and $w\geq \dfrac{\log(Aq/\epsilon)}{\log B}$.
	\end{lemma}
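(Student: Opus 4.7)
The plan is to argue by contradiction. Suppose $(u,v,w)$ is a positive integer solution with $u \le M$ and $w \ge \log(Aq/\epsilon)/\log B$, so that $|u\tau - v + \mu| < AB^{-w}$. The core idea is to multiply through by $q$ and exploit that, because $p/q$ is a convergent of $\tau$, the integer $p$ is the nearest integer to $q\tau$; hence $|q\tau - p| = \lVert q\tau\rVert$, a quantity that is small (on the order of $1/q$) by the classical theory of continued fractions.

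First I would multiply by $q$ to obtain $|u(q\tau) - vq + \mu q| < qAB^{-w}$, and then rewrite $q\tau = p + (q\tau - p)$ to split off the integer $p$. This recasts the left side as $\bigl|(up - vq) + \mu q + u(q\tau - p)\bigr|$, in which $up - vq$ is an integer.

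Next, by the triangle inequality $|X+Y| \le |X+Y+Z|+|Z|$ applied with $Z = u(q\tau - p)$, together with the bound $|u(q\tau-p)| \le M\lVert q\tau\rVert$ coming from $u \le M$, I obtain
\[
\bigl|(up-vq) + \mu q\bigr| \;<\; qAB^{-w} + M\lVert q\tau\rVert.
\]
Since $up - vq \in \mathbb{Z}$, the left side is at least $\lVert \mu q\rVert$, so rearranging yields
\[
\epsilon \;=\; \lVert \mu q\rVert - M\lVert q\tau\rVert \;<\; qAB^{-w}.
\]
Because $\epsilon > 0$ and $B > 1$, this forces $w < \log(Aq/\epsilon)/\log B$, contradicting the assumed lower bound on $w$ and completing the argument.

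I do not foresee any serious obstacle: the only nonroutine step is the decision to clear denominators by multiplying by $q$, which replaces the real-valued linear form $u\tau - v + \mu$ by an expression whose distance to $\mathbb{Z}$ is controlled directly by $\lVert \mu q\rVert$ and $\lVert q\tau\rVert$, at which point the quantity $\epsilon$ appears of its own accord. The fact that $q$ is a convergent denominator (together with the size hypothesis in the statement) is precisely what makes $\lVert q\tau\rVert$ genuinely small, so that the positivity of $\epsilon$ is not a vacuous condition in applications.
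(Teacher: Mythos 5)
Your argument is correct and is exactly the standard proof of this reduction lemma as given in the cited source of Dujella and Peth\H{o}; the paper itself only quotes the statement from \cite{dp1998} without reproducing a proof, so there is nothing to diverge from. One small point worth noting: the hypothesis is misprinted in the statement as $\tau > 6M$ when it should read $q > 6M$, and it is this condition (which you implicitly invoke) that guarantees $|q\tau - p| = \lVert q\tau\rVert$ and keeps $M\lVert q\tau\rVert$ small enough for $\epsilon > 0$ to be attainable.
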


	We conclude this section by recalling the following lemma that we need in the sequel:

	\begin{lemma}\label{lem2}\cite{sl2014}. Let $r\geq 1$ and $H > 0$ be such that $H > (4r^2)^r$ and $H>L/(\log L)^r$. Then $L < 2^rH(\log H)^r$.
	\end{lemma}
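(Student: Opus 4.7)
The plan is to split on whether $L \leq H$ or $L > H$. The case $L \leq H$ is essentially free: since $H > (4r^2)^r \geq 4$ gives $\log H \geq 1$, we have $L \leq H \leq 2^r H(\log H)^r$ immediately. So the only substantive case is $L > H$.

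In that case I rewrite the second hypothesis as $L < H(\log L)^r$ and take logarithms to obtain $\log L < \log H + r\log\log L$. Writing $v = \log L$ and $c = \log H$, this is $v < c + r\log v$, while the conclusion, after exponentiation, is $v < c + r\log(2c)$. My intermediate target is the sharper claim $v < 2c$, i.e.\ $L < H^2$; once this is in hand, $(\log L)^r < (2\log H)^r = 2^r(\log H)^r$, and substituting into $L < H(\log L)^r$ delivers $L < 2^r H(\log H)^r$.

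To prove $v < 2c$ I argue by contradiction. Suppose $v \geq 2c$. On the one hand $v - c \geq v/2$, so $v < c + r\log v$ gives $v/2 < r\log v$, hence $v/\log v < 2r$. On the other hand $v - c \geq c$ gives $c < r\log v$, so $v > e^{c/r}$; and the hypothesis $H > (4r^2)^r$ reads $c > r\log(4r^2)$, producing $e^{c/r} > 4r^2$ and therefore $v > 4r^2$. Since $x\mapsto x/\log x$ is increasing past $e$, it suffices to verify $v/\log v \geq 2r$ at the single point $v = 4r^2$; that reduces to $r \geq \log(2r)$, a routine inequality that holds for every $r \geq 1$. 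The two bounds on $v/\log v$ then contradict each other, ruling out $v \geq 2c$.

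The whole argument is essentially two logarithmic manipulations glued together by a monotonicity check. The only mildly delicate step is the bookkeeping in the contradiction: one must squeeze a lower bound for $v$ (via $v > e^{c/r} > 4r^2$) and an upper bound for $v/\log v$ (via $v < 2r\log v$) out of the single inequality $v < c + r\log v$, and then verify $r \geq \log(2r)$ to make the clash occur. Everything else is routine.
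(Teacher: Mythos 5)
Your proof is correct. Note that the paper itself offers no proof of this lemma --- it is quoted verbatim from G\'uzman S\'anchez and Luca \cite{sl2014} --- so there is nothing internal to compare against; your argument is in fact essentially the standard one for this kind of ``$L$ versus $L/(\log L)^r$'' inversion. All the steps check: in the case $L\le H$ the bound is immediate from $\log H>\log 4>1$; in the case $L>H$ the substitution $v=\log L$, $c=\log H$ turns the hypothesis into $v<c+r\log v$, and your contradiction argument for $v<2c$ is sound --- from $v\ge 2c$ you correctly extract both $v/\log v<2r$ and $v>e^{c/r}>4r^2$, the monotonicity of $x/\log x$ on $(e,\infty)$ applies since $4r^2\ge 4>e$, and the reduction of $4r^2/\log(4r^2)\ge 2r$ to $r\ge\log(2r)$ (true for $r\ge1$ since $r-\log(2r)$ is nondecreasing there and positive at $r=1$) is right. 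Once $v<2c$ is in hand, feeding $(\log L)^r<2^r(\log H)^r$ back into $L<H(\log L)^r$ gives the claim exactly as you say.
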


\section{Repdigits as difference of two balancing numbers}
	\begin{theorem}\label{thm2} There are no non-trivial repdigits which can be expressed as the difference of two balancing numbers.\\
	\end{theorem}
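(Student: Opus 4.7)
The plan is to combine Binet's formula with Matveev's theorem (Theorem~\ref{thm1}) and the Baker--Davenport reduction (Lemma~\ref{lem1}) in the standard two-step attack on Diophantine equations of this form. First, using $\alpha^{n-1}\leq B_n<\alpha^n$ together with the elementary estimates $10^{k-1}\leq d(10^k-1)/9<10^k$, equation \eqref{1} forces $n\asymp k$; more precisely, explicit linear bounds of the form $k<n\leq Ck$ emerge, putting $n$ and $k$ on the same logarithmic scale and ensuring that comparisons like $10^k\asymp\alpha^n$ are available throughout the argument.

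Substituting Binet's formula into \eqref{1} and rearranging gives
\begin{equation*}
\frac{\alpha^n}{4\sqrt 2}-\frac{d\cdot 10^k}{9} = -\frac{d}{9}+\frac{\beta^n}{4\sqrt 2}+\frac{\alpha^m-\beta^m}{4\sqrt 2}.
\end{equation*}
Dividing through by $d\cdot 10^k/9$ produces the first linear form
\begin{equation*}
\Lambda_1 := \frac{9\,\alpha^n}{4\sqrt 2\,d\,10^k}-1,
\end{equation*}
and the right-hand side above shows $|\Lambda_1|\ll\alpha^{m-n+2}$. I would apply Theorem~\ref{thm1} with $(\gamma_1,\gamma_2,\gamma_3)=(\alpha,10,9/(4\sqrt 2\,d))$ and $(b_1,b_2,b_3)=(n,-k,1)$ to obtain $\log|\Lambda_1|>-C_1(1+\log n)$, and comparing the two bounds will give an inequality of the shape $(n-m)\log\alpha<C_1\log n+C_2$, so that $n-m\ll\log n$.

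To bound $n$ itself I would group $\alpha^n$ and $\alpha^m$ together on the left and form the second linear form
\begin{equation*}
\Lambda_2 := \frac{9\,\alpha^n(1-\alpha^{m-n})}{4\sqrt 2\,d\,10^k}-1,
\end{equation*}
whose right-hand error is now only the $\beta$-terms and $-d/9$, so $|\Lambda_2|\ll\alpha^{-n}$. A second application of Theorem~\ref{thm1} with $\gamma_3=9(1-\alpha^{m-n})/(4\sqrt 2\,d)$ requires a height estimate, and the rules for the logarithmic height recalled in Section~2 give $h(\gamma_3)=O((n-m)h(\alpha))=O(\log n)$ by the previous step. Matveev's inequality then outputs $\log|\Lambda_2|>-C_3(\log n)^2$, and comparison with $\log|\Lambda_2|<-n\log\alpha+O(1)$ yields $n<C_4(\log n)^2$; Lemma~\ref{lem2} converts this into an explicit (very large) absolute upper bound $n<N_0$.

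The final step is the reduction. Writing the inequality for $\Lambda_1$ in the form $|n\tau-k+\mu_d|<AB^{-(n-m)}$ with $\tau=\log\alpha/\log 10$ and $\mu_d=\log(9/(4\sqrt 2\,d))/\log 10$, Lemma~\ref{lem1} applied with $M=N_0$ to a suitable convergent of $\tau$ shrinks the bound on $n-m$ to a small integer. Feeding this small value of $n-m$ back into the $\Lambda_2$ inequality and reducing once more, separately for each $d\in\{1,\dots,9\}$, cuts the bound on $n$ to something tractable; a direct Mathematica search over the resulting finite set of triples $(n,m,k)$ will verify that no non-trivial repdigit occurs. The main obstacle I expect is bookkeeping rather than a single conceptual difficulty: Matveev's constants are enormous, and the Baker--Davenport reduction must be performed separately for each $d$, each giving its own $\mu_d$ for which one must check the positivity condition $\varepsilon>0$ at whatever convergent is chosen.
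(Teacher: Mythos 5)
Your proposal is correct and follows essentially the same route as the paper: two applications of Matveev's theorem (first to the linear form isolating $\alpha^n$ against $d\cdot 10^k/9$ to bound $n-m$ in terms of $\log n$, then to the form carrying the factor $1-\alpha^{m-n}$ to get $n\ll(\log n)^2$), followed by Lemma~\ref{lem2} for an absolute bound and two rounds of Baker--Davenport reduction for each $d$. The only cosmetic difference is that you normalize by $d\cdot 10^k/9$ rather than by $\alpha^n/(4\sqrt{2})$, which changes nothing of substance.
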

	
\begin{proof} A brief computer search assures that there is no solution of \eqref{1} in the range $n\in\{1,50\}$, i.e., there is no repdigit as difference of two balancing numbers for $n\leq50$.
	
So, assume that $n>50$. Using Binet's formula for the balancing number sequence, \eqref{1} can be written as\\
	\begin{equation}\label{3}
		\frac{\alpha^{n}-\beta^{n}}{4\sqrt{2}} - \frac{\alpha^{m}-\beta^{m}}{4\sqrt{2}} = d\biggr(\frac{10^{k}-1 }{9}\biggr),
	\end{equation}
	\newline which further implies $\frac{\alpha^{n}}{4\sqrt{2}} - \frac{d10^k}{9} = \frac{\alpha^{m}}{4\sqrt{2}} + \frac{\beta^{n}}{4\sqrt{2}} - \frac{\beta^{m}}{4\sqrt{2}} - \frac{d}{9}$. Taking absolute values on both sides, we obtain\\
	\begin{equation*}
		\biggr|\frac{\alpha^{n}}{4\sqrt{2}} - \frac{d10^k}{9}\biggr| \leq \biggr|\frac{\alpha^{m}}{4\sqrt{2}}\biggr| + 3 \leq \frac{4\alpha^{m}}{4\sqrt{2}}.
	\end{equation*} 
	Dividing both sides of the above inequality by $\frac{\alpha^{n}}{4\sqrt{2}}$ yields\\
	\begin{equation}\label{4}
		{\biggr|1- \alpha^{-n}10^{k}\biggr(\frac{4d\sqrt{2}}{9}\biggr)\biggr|} <\frac{4}{\alpha^{n-m}}.
	\end{equation}
	We set $\Gamma_1=1-\alpha^{-n}\biggr(\dfrac{4d\sqrt{2}}{9}\biggr)$. We need to show $\Gamma_1 \neq 0$. On the contrary, suppose $\Gamma_1=0$, then $\alpha^{2n}= \frac{32 d^{2}10^{2k}}{81} \in \mathbb{Q}$, which is a contradiction to the fact that $\alpha^{n}$ is irrational for any $n > 0$. Therefore, $\Gamma_1 \neq 0$. \newline Take 
	\begin{center}
		
		$\lambda_1$ = $\alpha$, $\lambda_2$ = 10, $\lambda_3$ = $\dfrac{4d\sqrt{2}}{9}$, $b_1 = -n$, $b_2 = k$, $b_3 = 1, l = 3$, \end{center} where $\lambda_1$, $\lambda_2$, $\lambda_3$ $\in \mathbb{Q}$ and $b_1, b_2, b_3 \in \mathbb{Z}$. Observe that $\mathbb{Q}(\lambda_1, \lambda_2, \lambda_3) = \mathbb{Q}(\alpha)$, so $d_\mathbb{L}$ = 2.
	Since $k < n$, we take $D$ = max$\{n, k, 1\} = n$. By the properties of the absolute $logarithmic$ $height$, the logarithmic heights of $\lambda_1$, $\lambda_2$ and $\lambda_3$ are calculated as $h(\lambda_1)=\frac{\log\alpha}{2}$, $h(\lambda_2)= \log 10$ and $h(\lambda_3)\leq h(4d\sqrt{2})+h(9) < 6.2$.
	\newline 
	Thus,
	\begin{center}
		max$\{2h(\lambda_1),|\log \lambda_1|, 0.16 \} = \log \alpha = A_1$,\end{center}
	\begin{center}
		max$\{2h(\lambda_2),|\log \lambda_2|, 0.16 \} = 2\log 10 = A_2$,\end{center}
	\begin{center}
		max$\{2h(\lambda_3),|\log \lambda_3|, 0.16 \} < 12.4 = A_3$.\end{center}
	Applying Theorem \ref{thm1}, we get a lower bound for $\log|\Gamma_1|$ as \begin{equation*}
		\log|\Gamma_1| > -1.4\cdot30^6\cdot3^{4.5}\cdot2^2 (1+\log 2)(1+\log n)(\log\alpha)(2\log 10)(12.4).\end{equation*}
	Comparing the above inequality with \eqref{4} leads to 
	\begin{equation}\label{5}
		(n-m)\log(\alpha) < \log 4 + 9.8 \cdot 10^{13} (1+\log n) < 9.9 \cdot 10^{13} (1+\log n).
	\end{equation}
	After a rearrangement of  equation \eqref{3}, we get 
	\begin{center}
		$\dfrac{\alpha^{n}}{4\sqrt{2}} - \dfrac{\alpha^{m}}{4\sqrt{2}} - \dfrac{d10^k}{9}$ =  $\dfrac{\beta^{n}}{4\sqrt{2}} - \dfrac{\beta^{m}}{4\sqrt{2}} - \dfrac{d}{9}.$
		
	\end{center}
	Taking absolute values on both sides, we obtain\\
	\begin{equation*}
		\biggr|\frac{\alpha^{n}}{4\sqrt{2}} - \frac{\alpha^{m}}{4\sqrt{2}} - \frac{d10^k}{9}\biggr| \leq 3.
	\end{equation*} 
	Dividing both sides of the above inequality by $\frac{\alpha^{n}}{4\sqrt{2}}$$(1-\alpha^{m-n})$, we get
	\begin{equation}\label{6}
		{\biggr|1- \alpha^{-n}10^{k}\biggr(\frac{4d\sqrt{2}}{9(1-\alpha^{m-n})}\biggr)\biggr|} <\frac{4}{\alpha^{n}}.
	\end{equation}
	We set $\Gamma_2$ = $1- \alpha^{-n}10^{k}\biggr(\dfrac{4d\sqrt{2}}{9(1-\alpha^{m-n})}\biggr)$.
	\newline Similarly, one can check that $\Gamma_2 \neq 0$. Thus, we have h($\lambda_1$)=h($\alpha$)=$\frac{\log\alpha}{2}$ and h($\lambda_2$)=h(10)=$\log 10$. Let $\lambda_3$ = $\biggr(\dfrac{4d\sqrt{2}}{9(1-\alpha^{m-n})}\biggr)$. Then, 
	\begin{equation*}
		\begin{split}
			h(\lambda_3) & \leq h(4d\sqrt{2}) + h(9(1-\alpha^{m-n}))\\
			&  \leq 2\log 9+\log 4+\log(\sqrt{2})+\log 2+(n-m)\frac{\log\alpha}{2}\\
			&  < 6.9+ (n-m) \frac{\log(\alpha)}{2}.
		\end{split}
	\end{equation*}
	Hence, from \eqref{5} we obtain 
	\begin{center}
		$h(\lambda_3) < 5\cdot 10^{13}(1+\log n)$.
	\end{center}
	Thus, $A_3 = 10 \cdot 10^{13}(1+\log n)=10^{14}(1+\log n)$. 
	By virtue of Theorem \ref{thm1},
	 $$\log|\Gamma_2| > -1.4\cdot30^6\cdot3^{4.5}\cdot2^2 (1+\log 2)(1+\log n)(\log\alpha)(2\log 10)(10^{14}(1+\log n)).$$
	Comparing the above inequality with \eqref{6} gives 
	\begin{equation*}
		n\log\alpha < \log 4 + 7.9 \cdot 10^{26} (1+\log n)^2 < 8 \cdot 10^{26} (1+\log n)^2.
	\end{equation*}
	With the notations of Lemma \ref{lem2}, we take $r=2, L=2, H= \frac{8\cdot10^{26}}{\log \alpha}$ to get 
	\begin{center}
		$ n <$ $2^2$ $\biggr(\dfrac{8\cdot10^{26}}{\log\alpha}\biggr)$ $\biggr(\log\biggr(\dfrac{8\cdot10^{26}}{\log \alpha}\biggr)\biggr)^2$. 
	\end{center}
	Hence, a computer search with {\it Mathematica} gives  $n<6.9\cdot10^{30}$.
	 Now, let us try to reduce the upper bound on $n$ by using the Baker-Davenport reduction method due to Dujella and Peth\H{o}\cite{dp1998}. Let 
$$\Lambda_1 = -n\log\alpha + k\log 10 + \log\biggr(\dfrac{4d\sqrt{2}}{9}\biggr).$$
	The inequality \eqref{4} can be written as 
	\begin{center}
		$|e^{\Lambda_1} - 1| < \dfrac{4}{\alpha^{n-m}}$.
	\end{center}
	Observe that $\Lambda_1 \neq 0$ as $e^{\Lambda_1} - 1 = \Gamma \neq 0$.
	Assuming $n-m\geq 2$, the right hand side in the above inequality is at most $\frac{4}{(3+2\sqrt{2})^2}<\frac{1}{2}$. The inequality $|e^{z} - 1| < y$ for real values of $z$ and $y$ implies $z < 2y$. Thus, we get $|\Lambda_1| < \frac{8}{\alpha^{n-m}}$, which implies that
	\begin{center}
		
		$\biggr|-n\log\alpha + k\log 10 + \log\biggr(\dfrac{4d\sqrt{2}}{9}\biggr) \biggr| < \dfrac{8}{\alpha^{n-m}}$.
	\end{center}
	Dividing both sides of the above inequality by $\log \alpha$, we obtain
	
	\begin{equation}\label{7}
		\biggr|k\biggr(\frac{\log 10}{\log\alpha}\biggr) - 2n + \frac{\log(\frac{4d\sqrt{2}}{9})}{\log\alpha}\biggr| <\frac{5}{\alpha^{n-m}}.
	\end{equation}
	To apply Lemma \ref{lem1}, let 
	\begin{center}
		$u=k$, $\tau=\dfrac{\log 10}{\log\alpha}$, $v = n$, $\mu= \biggr(\dfrac{\log(\frac{4d\sqrt{2}}{9})}{\log\alpha}\biggr)$, $A= 5, B= \alpha, w = n-m$.
	\end{center}
	
	We can take $M=6.9\cdot10^{30}$ which is an upper bound on $u$. The denominator of $62^{th}$ convergent of $\tau$ is $q_{62}$= 82660367338512336905381670798737, which exceeds $6M$. Considering the fact that $1\leq d\leq 9$, a quick computation with {\it Mathematica} gives us the inequality $0 < \epsilon := \left\lVert \mu q_{62}\right\rVert - M\left\lVert \tau q_{62}\right\rVert = 0.243566$. Applying Lemma \ref{lem1} to the inequality \eqref{7} for $1\leq d\leq 9$, we get $n-m\leq 43$.
	
Now, for $n-m\leq 43$, put 
	\begin{center}
		$\Lambda_2 = -n\log\alpha + k\log 10 + \log\biggr(\dfrac{4d\sqrt{2}}{9(1-\alpha^{m-n})}\biggr)$. 
	\end{center}
	The inequality \eqref{6} can be written as 
	\begin{center}
		$|e^{\Lambda_2} - 1| < \dfrac{4}{\alpha^{n}}$.
	\end{center}
	Observe that $\Lambda_2 \neq 0$ as $e^{\Lambda_2} - 1 = \Gamma \neq 0$.
	Assuming $n\geq 2$, the right hand side in the above inequality is at most $\frac{1}{2}$. The inequality $|e^{z} - 1| < y$ for real values of $z$ and $y$ implies $z < 2y$. Thus, we get $|\Lambda_2| < \frac{8}{\alpha^{n}}$, which implies that
	\begin{center}
$\biggr|-n\log\alpha + k\log 10 + \log\biggr(\dfrac{4d\sqrt{2}}{9}\biggr) \biggr| < \dfrac{8}{\alpha^{n}}$.
	\end{center}
	Dividing both sides of the above inequality by $\log \alpha$ gives
	
	\begin{equation}\label{8}
		\biggr|k\biggr(\frac{\log 10}{\log\alpha}\biggr) - n + \dfrac{\log (\frac{4d\sqrt{2}}{9(1-\alpha^{m-n})})}{\log\alpha}\biggr| <\dfrac{5}{\alpha^{n}}.
	\end{equation}
	Let  \begin{center}  
		$u=k$, $\tau=\dfrac{\log 10}{\log\alpha}$, $v = n$, $\mu=\dfrac{\log (\frac{4d\sqrt{2}}{9(1-\alpha^{m-n})})}{\log\alpha} $, $A= 5, B= \alpha, w = n$.\end{center}
	Choose $M = 6.9\cdot 10^{30}$. We find $q_{64}$=193515224029707700321265026524859 , the denominator of $64^{th}$ convergent of $\tau$ exceeds $6M$ with 0 $< \epsilon := \left\lVert \mu q_{64}\right\rVert - M\left\lVert \tau q_{64}\right\rVert$ = 0.1734988. Applying Lemma \ref{lem1} to the inequality \eqref{8} for $1\leq d\leq 9$, we get $n\leq 44$. This contradicts the assumption that $n > 50$.
	\end{proof}
	
\section{Repdigits as difference of two Lucas-balancing numbers}

\begin{theorem}\label{thm3} There are no non-trivial repdigits which can be expressed as the difference of two Lucas-balancing numbers.
\end{theorem}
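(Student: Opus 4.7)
The plan is to follow the Matveev / Baker--Davenport strategy used in the proof of Theorem \ref{thm2}, with the Binet formula $C_n = \frac{\alpha^n + \beta^n}{2}$ replacing that of $B_n$. A direct \emph{Mathematica} check first disposes of $n \leq 50$, so I assume $n > 50$ throughout. Equation \eqref{2} becomes
\[
\frac{\alpha^n + \beta^n}{2} - \frac{\alpha^m + \beta^m}{2} = d\left(\frac{10^k - 1}{9}\right),
\]
and isolating $\alpha^n/2$ against $d\cdot 10^k/9$, then dividing by $\alpha^n/2$, yields an inequality of the form $|\Gamma_1| < c_1\,\alpha^{m-n}$, where
\[
\Gamma_1 := 1 - \alpha^{-n}\cdot 10^k \cdot \frac{2d}{9}.
\]
Nonvanishing of $\Gamma_1$ reduces to irrationality of $\alpha^n$, since $\Gamma_1 = 0$ would force $\alpha^n \in \mathbb{Q}$. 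Applying Theorem \ref{thm1} with $\lambda_1 = \alpha$, $\lambda_2 = 10$, $\lambda_3 = 2d/9 \in \mathbb{Q}$, and $d_{\mathbb{L}} = 2$, I obtain a first bound of the form $(n-m)\log\alpha < C_1(1 + \log n)$. Since $\lambda_3$ is rational with denominator at most $9$, the height $h(\lambda_3) \leq \log 18$ is smaller than the corresponding height in the balancing case, so the constants only improve.

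For the second step I rearrange \eqref{2} as
\[
\frac{\alpha^n - \alpha^m}{2} - \frac{d\cdot 10^k}{9} = -\frac{\beta^n - \beta^m}{2} - \frac{d}{9},
\]
and factor $\alpha^n(1 - \alpha^{m-n})/2$ on the left (exactly as in the balancing proof, since the $\beta$-terms are bounded and get absorbed into the error). Dividing through produces
\[
\Gamma_2 := 1 - \alpha^{-n}\cdot 10^k \cdot \frac{2d}{9(1 - \alpha^{m-n})}, \qquad |\Gamma_2| < c_2\,\alpha^{-n}.
\]
Nonvanishing of $\Gamma_2$ is verified as before, and a second application of Theorem \ref{thm1}, using the first-step bound on $n-m$ to control the new $h(\lambda_3)$, produces $n\log\alpha < C_2(1 + \log n)^2$. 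Lemma \ref{lem2} with $r = 2$ then yields an explicit absolute bound $n < N_0$ of roughly the same order ($\sim 10^{30}$) as in Theorem \ref{thm2}.

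Finally, I linearize $\Gamma_1$ and $\Gamma_2$ as
\[
\Lambda_1 = -n\log\alpha + k\log 10 + \log\!\left(\tfrac{2d}{9}\right), \qquad \Lambda_2 = -n\log\alpha + k\log 10 + \log\!\left(\tfrac{2d}{9(1 - \alpha^{m-n})}\right),
\]
and apply Lemma \ref{lem1} twice. The first reduction, with $\tau = \log 10/\log\alpha$ and $M = N_0$, reduces $n - m$ to a small bound; this bound is then fed into the second reduction (with a shift parameter depending on $d$ and $n - m$) to reduce $n$ itself below $50$, contradicting $n > 50$. The main obstacle is purely computational: one must locate continued-fraction convergents of $\log 10 / \log\alpha$ of sufficient depth so that $\epsilon = \|\mu q\| - M\|\tau q\|$ remains positive uniformly over $d \in \{1,\dots,9\}$ and (in the second round) over all admissible values of $n - m$. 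Since Theorem \ref{thm2} succeeded with convergents of index around $62$--$64$, convergents of comparable depth should suffice here.
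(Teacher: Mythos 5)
Your proposal follows essentially the same route as the paper's own proof: the Binet formula, a first application of Theorem \ref{thm1} to bound $n-m$, a second application (using that bound to control $h(\lambda_3)$) together with Lemma \ref{lem2} to get $n \lesssim 10^{30}$, and two rounds of Lemma \ref{lem1} with convergents of $\log 10/\log\alpha$ to force $n$ below $50$. (Incidentally, your Binet formula $C_n=(\alpha^n+\beta^n)/2$ is the correct one --- the paper's displayed $(\alpha^n-\beta^n)/2$ is a typo --- but this sign makes no difference to the argument since the $\beta$-terms are absorbed into the error.)
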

	
\begin{proof} The proof is similar to that of Theorem \ref{thm2}. Using $Mathematica$, it is easy to see that there is no repdigit as difference of two Lucas-balancing numbers for $n\leq50$. So, assume that $n>50$. Let
	\begin{equation}\label{9}
		\frac{\alpha^{n}-\beta^{n}}{2} - \frac{\alpha^{m}-\beta^{m}}{2} = d\biggr(\frac{10^{k}-1 }{9}\biggr),
	\end{equation}
	 which further implies $\frac{\alpha^{n}}{2} - \frac{d10^k}{9} = \frac{\alpha^{m}}{2} + \frac{\beta^{n}}{2} - \frac{\beta^{m}}{2} - \frac{d}{9}$. Taking absolute values on both sides, we obtain\\
	\begin{equation*}
		\biggr|\frac{\alpha^{n}}{2} - \frac{d10^k}{9}\biggr| \leq \biggr|\frac{\alpha^{m}}{2}\biggr| + 2 \leq \frac{3\alpha^{m}}{2}.
	\end{equation*} 
	Dividing both sides of the above inequality by $\frac{\alpha^{n}}{2}$ yields\\
	\begin{equation}\label{10}
		{\biggr|1- \alpha^{-n}10^{k}\biggr(\frac{2d}{9}\biggr)\biggr|} <\frac{3}{\alpha^{n-m}}.
	\end{equation}
	Take $\Gamma_3$ = $1-\alpha^{-n}10^{k}\bigg(\dfrac{2d}{9}\biggr)$. We need to show $\Gamma_3 \neq 0$. On the contrary, suppose $\Gamma_3=0$, then $\alpha^{2n}= \frac{4 d^{2}10^{2k}}{81} \in \mathbb{Q}$, which is a contradiction to the fact that $\alpha^{n}$ is irrational for any $n > 0$. Therefore, $\Gamma_3 \neq 0$.
	
	 Take 
	\begin{center}
		$\lambda_1$ = $\alpha$, $\lambda_2 = 10$, $\lambda_3$ = $\dfrac{2d}{9}$, $b_1 = -n$, $b_2 = k$, $b_3 = 1, l = 3$,
	\end{center}  where $\lambda_1$, $\lambda_2$, $\lambda_3$ $\in \mathbb{Q}$ and $b_1, b_2, b_3 \in \mathbb{Z}$. Observe that $\mathbb{Q}(\lambda_1, \lambda_2, \lambda_3) = \mathbb{Q}(\alpha)$, so $d_\mathbb{L} = 2$.
	Since $k < n$, we take $D=$max$\{n, k, 1\} = n$. The logarithmic heights of $\lambda_1$, $\lambda_2$ and $\lambda_3$ are calculated as $h(\lambda_1)=\frac{\log\alpha}{2}$, $h(\lambda_2)= \log 10$ and $h(\lambda_3)\leq h(4d\sqrt{2})+h(9) < 5.1$ .
	\newline Thus,
	\begin{center}
		max$\{2h(\lambda_1),|\log \lambda_1|, 0.16 \} = \log \alpha$ = $A_1$,
		max$\{2h(\lambda_2),|\log \lambda_2|, 0.16 \} = 2\log 10 = A_2$,
		max$\{2h(\lambda_3),|\log \lambda_3|, 0.16 \} < 10.2 = A_3$.\end{center}
	Now, applying Theorem \ref{thm1}, we get a lower bound for $\log|\Gamma_3|$ as \begin{equation*}
		\log|\Gamma_3| > -1.4\cdot30^6\cdot3^{4.5}\cdot2^2 (1+\log 2)(1+\log n)(\log\alpha)(2\log 10)(10.2).\end{equation*}
	Comparing the above inequality with \eqref{10}, gives 
	\begin{equation}\label{11}
		(n-m)\log(\alpha) < \log 3 + 8.1 \cdot 10^{13} (1+\log n) < 8.2 \cdot 10^{13} (1+\log n).
	\end{equation}
	Rearranging equation \eqref{9}, we get
	\begin{center}
		$\dfrac{\alpha^{n}}{2} - \dfrac{\alpha^{m}}{2} - \dfrac{d10^k}{9} =  \dfrac{\beta^{n}}{2} - \dfrac{\beta^{m}}{2} - \dfrac{d}{9}$.
		
	\end{center}
	Taking absolute values on both sides of the above inequality, we obtain\\
	\begin{equation*}
		\biggr|\frac{\alpha^{n}}{2} - \frac{\alpha^{m}}{2} - \frac{d10^k}{9}\biggr| \leq 2.
	\end{equation*} 
	Dividing both sides of the above inequality by $\frac{\alpha^{n}}{2}$$(1-\alpha^{m-n})$ implies
	\begin{equation}\label{12}
		{\biggr|1- \alpha^{-n}10^{k}\biggr(\frac{2d}{9(1-\alpha^{m-n})}\biggr)\biggr|} <\frac{3}{\alpha^{n}}.
	\end{equation}
	Take $\Gamma_4$ = $1- \alpha^{-n}10^{k}\biggr(\dfrac{2d}{9(1-\alpha^{m-n})}\biggr)$.
	\newline In a similar manner, one can check that $\Gamma_4 \neq 0$. Here we have $h(\lambda_1)=h(\alpha)=\frac{\log\alpha}{2}$ and $h(\lambda_2)=h(10)=\log 10$. Let $\lambda_3$ = $\biggr(\dfrac{2d}{9(1-\alpha^{m-n})}\biggr)$. Then, 
	\begin{equation*}
		\begin{split}
			h(\lambda_3) & \leq h(2d) + h(9(1-\alpha^{m-n}))\\
			& \leq 2\log 9+2\log 2+(n-m)\frac{\log\alpha}{2}\\
			& < 5.8+ (n-m) \frac{\log(\alpha)}{2}.
		\end{split}
	\end{equation*}
	Hence, from \eqref{11} we obtain 
	\begin{center}
		$h(\lambda_3) < 4.2\cdot 10^{13}(1+\log n)$.
	\end{center}
	Thus, we take $A_3$ = 8.4 $\cdot 10^{13}(1+\log n)$.
	\newline\newline By virtue of Theorem \ref{thm1}
	\begin{equation*}
		\begin{split}
			\log|\Gamma_4| >  -& 1.4\cdot30^6\cdot3^{4.5}\cdot2^2 (1+\log 2)(1+\log n)(\log\alpha)(2\log 10)\\
			& \times(8.4\cdot 10^{13})(1+\log n)
		\end{split}
	\end{equation*}
	\newline Comparing the above inequality with \eqref{12} gives 
	\begin{equation*}
		n\log\alpha < \log 3 + 6.7 \cdot 10^{26} (1+\log n)^2 < 6.8 \cdot 10^{26} (1+\log n)^2.
	\end{equation*}
	With the notations of Lemma \ref{lem2}, we take $r=2, L=2, H= \frac{6.8\cdot10^{26}}{\log \alpha}$ to get 
	\begin{center}
		n $<$ $2^2$ $\biggr(\dfrac{6.8\cdot10^{26}}{\log\alpha}\biggr)$ $\biggr(\log\biggr(\dfrac{6.8\cdot10^{26}}{\log \alpha}\biggr)\biggr)^2$ $<$ $5.8\cdot10^{30}$.
	\end{center}
	
	Now, we reduce the bound by using the Baker-Davenport reduction method due to Dujella and Peth\H{o} \cite{dp1998}. Let 
	\begin{center}
		$\Lambda_3 = -n\log\alpha + k\log 10 + \log\biggr(\dfrac{2d}{9}\biggr)$ .
	\end{center}
	The inequality \eqref{10} can be written as 
	\begin{center}
		$|e^{\Lambda_3} - 1| < \dfrac{3}{\alpha^{n-m}}$.
	\end{center}
	Observe that $\Lambda_3 \neq 0$ as $e^{\Lambda_3} - 1 = \Gamma \neq 0$.
	Assuming $n-m\geq 2$, the right hand side in the above inequality is at most $\frac{3}{(3+2\sqrt{2})^2}<\frac{1}{2}$. The inequality $|e^{z} - 1| < y$ for real values of $z$ and $y$ implies $z< 2y$. Thus, we get $|\Lambda_3| < \frac{6}{\alpha^{n-m}}$, which implies that
	\begin{center}
		
		$\biggr|-n\log\alpha + k\log 10 + \log\biggr(\dfrac{2d}{9}\biggr) \biggr| < \dfrac{6}{\alpha^{n-m}}$.
	\end{center}
	Dividing both sides of the above inequality by $\log \alpha$, we obtain
	
	\begin{equation}\label{13}
		\biggr|k\biggr(\frac{\log 10}{\log\alpha}\biggr) - 2n + \frac{\log(\frac{2d}{9})}{\log\alpha}\biggr| <\frac{4}{\alpha^{n-m}}.
	\end{equation}
	To apply Lemma \ref{lem1}, let 
	\begin{center}
		$u=k, \tau=\dfrac{\log 10}{\log\alpha}, v = n, \mu= \biggr(\dfrac{\log(\frac{2d}{9})}{\log\alpha}\biggr), A= 4, B= \alpha, w = n-m$.
	\end{center}
	
	We can take $M= 5.8\cdot10^{30}$ which  is  an  upper  bound  on $u$. The denominator of $62^{th}$ convergent of $\tau$ is $q_{62}$= 82660367338512336905381670798737, which exceeds $6M$. Considering the fact that $1\leq d\leq 9$, a quick computation with $Mathematica$ gives us the inequality $0 < \epsilon := \left\lVert \mu q_{60}\right\rVert - M\left\lVert \tau q_{62}\right\rVert = 0.0781826$. Applying Lemma \ref{lem1} to the inequality \eqref{13} for $1\leq d\leq 9$, we get $n-m\leq 43$. Now, for $n-m\leq 43$, put 
	\begin{center}
		$\Lambda_4 = -n\log\alpha + k\log 10 + \log\biggr(\dfrac{2d}{9(1-\alpha^{m-n})}\biggr)$. 
	\end{center}
	The inequality \eqref{12} can be written as 
	\begin{center}
		$|e^{\Lambda_4} - 1| < \dfrac{3}{\alpha^{n}}$.
	\end{center}
	Observe that $\Lambda_4 \neq 0$ as $e^{\Lambda_4} - 1 = \Gamma \neq 0$.
	Assuming $n\geq 2$, the right hand side in the above inequality is at most $\frac{1}{2}$. The inequality $|e^{z} - 1| < y$ for real values of $z$ and $y$ implies $z < 2y$. Thus, we get $|\Lambda_4| < \frac{6}{\alpha^{n}}$, which implies that
	\begin{center}
		
		$\biggr|-n\log\alpha + k\log 10 + \log\biggr(\dfrac{2d}{9}\biggr) \biggr| < \dfrac{6}{\alpha^{n}}$.
	\end{center}
	Dividing both sides of the above inequality by $\log \alpha$ gives
	
	\begin{equation}\label{14}
		\biggr|k\biggr(\frac{\log 10}{\log\alpha}\biggr) - n + \dfrac{\log (\frac{2d}{9(1-\alpha^{m-n})})}{\log\alpha}\biggr| <\frac{4}{\alpha^{n}}.
	\end{equation}
	Let \begin{center}
		$u=k, \tau=\dfrac{\log 10}{\log\alpha}, v = n, \mu= \biggr(\dfrac{\log(\frac{2d}{9(1-\alpha^{m-n})})}{\log\alpha}\biggr), A= 4, B= \alpha, w = n$.
	\end{center}
	Choose $M = 5.8\cdot 10^{30}$. We find $q_{65}= 497885304750610764058413408775840 $, the denominator of $65^{th}$ convergent of $\tau$ exceeds $6M$ with 0 $< \epsilon := \left\lVert \mu q_{65}\right\rVert - M\left\lVert \tau q_{65}\right\rVert = 0.0041201 $. Applying Lemma \ref{lem1} to the inequality \eqref{14} for $1\leq d\leq 9$, we get $n\leq 46$. This contradicts our assumption that $n > 50$.
	\end{proof}	
	
{\bf Data Availability Statements:} Data sharing not applicable to this article as no datasets were generated or analyzed during the current study.

{\bf Funding:} The authors declare that no funds or grants were received during the preparation of this manuscript.

{\bf Declarations:}

{\bf Conflict of interest:} On behalf of all authors, the corresponding author states that there is no Conflict of interest.


\begin{thebibliography}{99}

\bibitem{alt2019} C. Adegbindin, F. Luca and A. Togb\'{e}, \emph{Lucas numbers as sums of two repdigits},  Lith. Math. J., \textbf{59} (2019), 295-304.
		
\bibitem{br2021}  K. Bhoi and P. K. Ray,  \emph{Repdigits as difference of two Fibonacci or Lucas numbers}, Mat. Stud., \textbf{56} (2021), 124-132.	
		
\bibitem{bms2006} Y. Bugeaud, M. Mignotte and S. Siksek, \emph{Classical  and  modular  approaches  to  exponential  Diophantine 	equations I. Fibonacci and Lucas perfect powers}, Ann. of Math.,  \textbf{163} (2006), 969-1018.

\bibitem{dp1998} A. Dujella and A. Peth\H{o}, \emph{A generalization of a theorem of Baker and Davenport}, Quart. J. Math. Oxford (2), \textbf{49} (1998), 291-306.

\bibitem{d2023} MG Duman, \emph{Padovan numbers as difference of  two repdigits}, Indian J. Pure Appl. Math., https://doi.org/10.1007/s13226-023-00526-8, (2023). 

\bibitem{ekl2021} F. Erduvan, R. Keskin and F. Luca,  \emph{Fibonacci and Lucas numbers as difference of two repdigits}, Rend. Circ. Mat. Palermo (2),  \textbf{71} (2022), 575-589 .

\bibitem{sl2014} S. G\'{u}zman S\'{a}nchez and F. Luca, \emph{Linear combination of factorials and s-units in a binary recurrence sequence}, Ann. Math. Qu\'{e}, \textbf{38} (2014), 169-188.

\bibitem{l2000} F. Luca, \emph{Fibonacci and Lucas numbers with only one distinct digit}, Port. Math., \textbf{57} (2000), 243-254.


\bibitem{m2000} E.M. Matveev, \emph{An explicit lower bound for a homogeneous rational linear form in the logarithms of algebraic numbers II}, Izv. Math.,  \textbf{64} (2000), 1217-1269.

\bibitem{rp2021} S. G. Rayaguru and G. K. Panda, \emph{Balancing and Lucas-balancing numbers expressible as sums of two repdigits}, Integers, \textbf{21} (2021) p1, A7.

\bibitem{rp2020} S. G. Rayaguru and G. K. Panda, \emph{Balancing  numbers which are concatenation of two repdigits}, Bol. Soc. Mat. Mex. (3).,  \textbf{26} (2020), 911-919.

\bibitem{rp2018} S. G. Rayaguru and G. K. Panda, \emph{Repdigits as product of consecutive balancing or Lucas-balancing numbers}, Fibonacci Quart., \textbf{56} (2018), 319-324.

\end{thebibliography}
\end{document}